\documentclass[11pt, reqno]{amsart}
\usepackage{amssymb,amsthm, amsmath, amsfonts}
\usepackage{graphics}
\usepackage{hyperref}
\usepackage[all]{xy}
\usepackage{enumerate}
\usepackage{mathrsfs}

\setlength{\oddsidemargin}{0.2in}
\setlength{\evensidemargin}{0.2in}
\setlength{\textwidth}{6.1in}

\theoremstyle{plain}
\newtheorem{theorem}{Theorem}
\newtheorem{lemma}[theorem]{Lemma}
\newtheorem{proposition}[theorem]{Proposition}
\newtheorem{corollary}[theorem]{Corollary}
\theoremstyle{definition}

\newtheorem{remark}[theorem]{Remark}

\DeclareMathOperator{\Hom}{Hom}
\DeclareMathOperator{\Tor}{Tor}
\DeclareMathOperator{\reg}{reg}

\newcommand{\N}{{\mathbb{N}}}
\newcommand{\Z}{{\mathbb{Z}}}
\newcommand{\FI}{{\mathrm{FI}}}
\newcommand{\wS}{{\widetilde{\mathit{S}}}}

\title{A remark on FI-module homology}
\author{Wee Liang Gan}
\address{Department of Mathematics, University of California, Riverside, CA 92521, USA}
\email{wlgan@math.ucr.edu}

\author{Liping Li}
\address{College of Mathematics and Computer Science; Performance Computing and Stochastic Information Processing (Ministry of Education), Hunan Normal University; Changsha, Hunan 410081, China.}
\email{lipingli@hunnu.edu.cn}

\thanks{The second author is supported by the National Natural Science Foundation of China 11541002, the Construct Program of the Key Discipline in Hunan Province, and the Start-Up Funds of Hunan Normal University 830122-0037. We thank the referee for many helpful suggestions that improved the exposition of the paper.}

\begin{document}

\begin{abstract}
We show that the FI-homology of an FI-module can be computed via a Koszul complex. As an application, we prove that the Castelnuovo-Mumford regularity of a finitely generated torsion FI-module is equal to its degree.
\end{abstract}

\maketitle

\section{Introduction} \label{introduction}

Let $R$ be a commutative ring. Let $\FI$ be the category of finite sets and injective maps. An $\FI$-module is a (covariant) functor from $\FI$ to the category of $R$-modules.  In \cite[(11)]{CEFN}, Church, Ellenberg, Farb, and Nagpal defined, for any $\FI$-module $V$, a complex $\wS_{-*}V$ of $\FI$-modules. The same complex was also considered independently by Putman \cite[\S4]{Putman}, who called it the central stability chain complex. The purpose of our present paper is to give a proof that the homology of the complex $\wS_{-*}V$ is the $\FI$-homology $H^{\FI}_*(V)$ of $V$ whose definition we now recall.

Let $\N$ be the set of non-negative integers. For each $n\in \N$, let $[n]$ be the set $\{1,\ldots,n\}$ with $n$ elements; in particular, $[0]=\emptyset$. It is convenient to introduce the non-unital $R$-algebra
\begin{equation*}
 A := \bigoplus_{0\leqslant m\leqslant n} A_{m,n}
\end{equation*}
where $A_{m,n}$ is the free $R$-module on the set $\Hom_{\FI}([m],[n])$. For any $\alpha\in \Hom_{\FI}([m],[n])$ and $\beta\in \Hom_{\FI}([r],[s])$, their product $\alpha\beta$ in $A$ is defined by
\begin{equation*}
\alpha\beta := \left\{ \begin{array}{ll}
\alpha\circ \beta & \mbox{ if } s=m, \\
0 & \mbox{ else. }
\end{array} \right.
\end{equation*}

Define a two-sided ideal $J$ of $A$ by
\begin{equation*}
J := \bigoplus_{0\leqslant m<n} A_{m,n}.
\end{equation*}

For each $n\in\N$, let $e_n\in A_{n,n}$ be the identity endomorphism of $[n]$. A left $A$-module $M$ is said to be \emph{graded} if $M=\bigoplus_{n\in\N} e_n M$. If $V$ is an $\FI$-module, then $\bigoplus_{n\in\N} V([n])$ has the natural structure of a graded left $A$-module. The functor $V \mapsto \bigoplus_{n\in\N} V([n])$ is an equivalence from the category of $\FI$-modules to the category of graded left $A$-modules. Thus, we shall not distinguish between the notion of $\FI$-modules and the notion of graded left $A$-modules. By abuse of notation, we shall also denote $\bigoplus_{n\in\N} V([n])$ by $V$.

Let $a\in \N$. Following \cite{CE} and \cite[Definition 2.3.7]{CEF}, the \emph{$\FI$-homology} functor $H^{\FI}_a$ is, by definition, the $a$-th left-derived functor of the right exact functor
\begin{equation} \label{H0}
H^{\FI}_0: V \mapsto V/JV
\end{equation}
from the category of $\FI$-modules to itself. In other words, $H^{\FI}_a(V) := \Tor^A_a(A/J,V)$.

The main result of this paper is the following.
\begin{theorem} \label{main result}
Let $V$ be an $\FI$-module. For each $a\in\N$, one has:
\begin{equation*}
H^{\FI}_a(V) \cong H_a(\wS_{-*}V).
\end{equation*}
\end{theorem}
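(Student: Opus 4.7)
The plan is to realise the complex $\wS_{-*}V$ as $P_\bullet\otimes_A V$ for a projective resolution $P_\bullet \to A/J$ of \emph{right} $A$-modules, chosen once and for all independently of $V$. Given such a $P_\bullet$ together with a natural isomorphism $P_\bullet\otimes_A V\cong \wS_{-*}V$ of complexes of $R$-modules, the definition $H^{\FI}_a(V)=\Tor^A_a(A/J,V)$ immediately yields $H^{\FI}_a(V)\cong H_a(\wS_{-*}V)$.

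To construct $P_\bullet$, I would read off its shape from the explicit combinatorial definition of $\wS_{-p}V$ in \cite{CEFN}: each $P_{-p}$ is a direct sum of right $A$-modules of the form $e_{n-p}A$, twisted by the sign representation of the symmetric group $S_p$, and indexed in such a way that tensoring on the left with an FI-module $V$ recovers the graded pieces of $\wS_{-p}V$. The differential $P_{-p}\to P_{-p+1}$ is the alternating sum of face-like maps inherited from $\wS_{-*}$. Checking $P_{-p}\otimes_A V\cong \wS_{-p}V$ naturally in $V$ is then a routine computation using $e_kA\otimes_A V\cong V([k])$, and each $P_{-p}$ is manifestly a direct sum of projective right $A$-modules.

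The substance of the proof lies in showing $P_\bullet\to A/J\to 0$ is exact. Using the identification of Step~2, this is equivalent to proving that $\wS_{-*}M(k)$ is a resolution of $M(k)/JM(k)$ for every free FI-module $M(k)$. Fixing an internal degree $n$, the claim specialises to acyclicity (in positive degrees) of a combinatorial complex built from chains of injections $[k]\hookrightarrow[n-p]\hookrightarrow[n]$ equipped with sign-twisted face differentials. My plan is to exhibit an explicit contracting homotopy by distinguishing a canonical element of $[n]$ not in the image of the outer injection (say, the largest such element), inserting it in the last slot, and verifying the usual Koszul-type cancellation between insertion and deletion.

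The main obstacle is precisely this acyclicity step: producing a contracting homotopy that is uniform across internal degrees and whose signs align correctly with the differential. If the internal-degree-$n$ complex can be recognised as a classical acyclic Koszul complex (for example, the Koszul resolution attached to an exterior-algebra structure on $[n]$, or the augmented chain complex of a contractible simplicial set of flags of subsets), acyclicity follows immediately from a standard computation; otherwise, the contracting homotopy must be written down by hand and the sign bookkeeping checked carefully. Granted this, the four steps above combine to give the claimed isomorphism.
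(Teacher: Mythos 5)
Your strategy is sound and genuinely different in its formal framework from the paper's, although the mathematical heart is the same. The paper never constructs a resolution of $A/J$: it notes that $V\mapsto H_a(\wS_{-*}V)$ is a homological $\delta$-functor, proves the single key fact that $H_a(\wS_{-*}M(d))=0$ for $a\geqslant 1$ (Proposition \ref{exactness}), deduces coeffaceability from surjections $\bigoplus_i M(d_i)\to V$, and invokes Grothendieck's universality of $\delta$-functors together with $H_0(\wS_{-*}V)\cong V/JV$. You instead realize $\wS_{-*}V$ as $P_\bullet\otimes_A V$ for a complex of $(A,A)$-bimodules, projective as graded right $A$-modules, augmented to $A/J$; your reduction is correct, because in outer degree $n$ and inner degree $k$ the augmented complex $P_\bullet\to A/J$ is literally $(\wS_{-*}M(k))([n])\to (M(k)/JM(k))([n])$, so exactness of $P_\bullet\to A/J$ is exactly Lemma \ref{degree 0} together with Proposition \ref{exactness}. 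What your route buys is an explicit Koszul bimodule resolution of $A/J$ and the identification of all the derived functors at once, at the cost of some bookkeeping the paper avoids (that $e_kA$ is projective as a graded right module over the locally unital algebra $A$, that $e_kA\otimes_AV\cong V([k])$, and that the bimodule structure gives the homology of $P_\bullet\otimes_AV$ the correct $\FI$-module structure and agrees with $\Tor^A_a(A/J,V)$ as defined by resolving $V$). The paper proves the acyclicity by induction on $n$, via the short exact sequence of complexes $0\to C_{-*}\to(\wS_{-*}M(d))([n])\to\Sigma(\wS_{-*}M(d))([n-1])\to 0$, rather than by a homotopy.

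On the step you flagged as the main obstacle: your contracting homotopy does work, with two clarifications. First, ``the element not in the image of the outer injection'' must refer to the composite injection $[d]\hookrightarrow[n]$ (the map $\alpha$), not to the inclusion $[n]\setminus I\hookrightarrow[n]$, whose complement is just $I$. Second, for $n>d$ set $j(\alpha)=\max\bigl([n]\setminus\alpha([d])\bigr)$ and define, on a degree-$p$ basis element, $h(\alpha\otimes i_1\wedge\cdots\wedge i_p)=(-1)^{p+1}\,\alpha\otimes i_1\wedge\cdots\wedge i_p\wedge j(\alpha)$, interpreted as $0$ when $j(\alpha)\in\{i_1,\ldots,i_p\}$. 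The point that makes the signs come out is that $j(\alpha)$ depends only on the image of $\alpha$ in $[n]$, which the face maps do not change (they only enlarge the codomain); a direct check, separating the cases $j(\alpha)\in I$ and $j(\alpha)\notin I$, then gives $dh+hd=\mathrm{id}$, so $(\wS_{-*}M(d))([n])$ is acyclic in all degrees when $n>d$, while for $n=d$ the complex is concentrated in degree $0$ and for $n<d$ it vanishes. With this verification (and the routine bimodule remarks above) your argument is complete.
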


Let us briefly describe our strategy for proving Theorem \ref{main result}. For each $d\in \N$ and finite set $X$, let $M(d)(X) := R \Hom_{\FI}([d],X)$, the free $R$-module on the set of injections from $[d]$ to $X$. This defines an $\FI$-module $M(d)$. We shall prove that for each $d\in \N$, one has $H_a(\wS_{-*} M(d))=0$ whenever $a\geqslant 1$. This key result implies that for $a\geqslant 1$, the functor $V \mapsto H_a(\wS_{-*}V)$ is coeffaceable because every $\FI$-module is a quotient of $\bigoplus_{i\in I} M(d_i)$ for some $d_i\in \N$. It is easy to see that the collection of functors $V \mapsto H_a(\wS_{-*}V)$ for $a\geqslant 0$ form a homological $\delta$-functor with $H_0(\wS_{-*}V)\cong V/JV$, and hence Theorem \ref{main result} follows by a standard result in homological algebra.

We give an application of Theorem \ref{main result}.

For any $\FI$-module $V$, its \emph{degree} is defined to be
\begin{equation*}
\deg(V) := \sup\{ n\in \N \mid V([n]) \neq 0 \},
\end{equation*}
where by convention the supremum of an empty set is $-\infty$. The \emph{Castelnuovo-Mumford regularity} $\reg(V)$ of $V$ is defined to be the infimum of the set of all $c\in\Z$ such that

\begin{equation*}
\deg(H^{\FI}_a(V)) \leqslant c+a \quad \mbox{ for each } a\geqslant 1,
\end{equation*}
where by convention the infimum of an empty set is $\infty$.

\begin{theorem} \label{reg of torsion module}
Let $V$ be a finitely generated $\FI$-module with $\deg(V)<\infty$. Then
\begin{equation*}
\reg(V) = \deg(V).
\end{equation*}
\end{theorem}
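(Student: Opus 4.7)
My plan is to establish the equality $\reg(V)=\deg(V)$ by proving the two inequalities separately. Set $d:=\deg(V)$. Throughout, I will use Theorem \ref{main result} to identify $H^{\FI}_a(V)$ with $H_a(\wS_{-*}V)$.

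\textbf{Upper bound $\reg(V)\leq d$.} From the explicit description of the complex, the term $\wS_{-a}V$ evaluated on $[n]$ decomposes as a direct sum of values $V([m])$ with $m=n-a$, so it vanishes whenever $n-a>d$. Hence $\deg \wS_{-a}V \leq d+a$, and since homology is a subquotient, $\deg H^{\FI}_a(V)\leq d+a$, yielding $\reg(V)\leq d$.

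\textbf{Lower bound $\reg(V)\geq d$.} I would induct on $d$; the case $V=0$ is trivial. For the inductive step, form the short exact sequence $0\to W\to V\to V'\to 0$, where $W$ is the sub-FI-module of $V$ concentrated in degree $d$ with $W_d=V_d$ (well-defined since $V$ vanishes in degrees $>d$), and $V'=V/W$ has $\deg V'\leq d-1$. By induction, $\reg V'=\deg V'\leq d-1$. A direct computation with the complex $\wS_{-*}W$ shows that $H^{\FI}_a(W)$ is concentrated in degree $d+a$ with value $V_d^{\binom{d+a}{a}}$, which is nonzero. Feeding this into the long exact sequence in FI-homology, and using that $\deg H^{\FI}_a(V')\leq d-1+a<d+a$, one obtains
\[
H^{\FI}_{a+1}(V')_{d+a} \to V_d^{\binom{d+a}{a}} \to H^{\FI}_a(V)_{d+a} \to 0,
\]
so the conclusion reduces to showing this cokernel is nonzero for some $a\geq 1$.

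\textbf{Main obstacle.} This last step is the crux. If $V_d\neq (JV)_d$, one can check from the explicit form of the boundary in $\wS_{-*}V$ that its image lies componentwise in $(JV)_d\subseteq V_d$, so $H^{\FI}_a(V)_{d+a}$ surjects onto the nonzero module $(V_d/(JV)_d)^{\binom{d+a}{a}}$ for every $a\geq 1$, closing the argument. The delicate case is $V_d=(JV)_d$, where $V$ is generated strictly below degree $d$; here my plan is to take $a$ large enough that $H^{\FI}_{a+1}(V')$ vanishes (using finite projective dimension of $V'$, available when $R$ is Noetherian), or else to compute the cokernel directly by recognising the complex $\wS_{-*}V$ restricted to degree $d+a$ as a truncated Koszul-type complex modelled on the reduced chain complex of the simplex $\Delta^{d+a-1}$, whose truncation has nontrivial top homology.
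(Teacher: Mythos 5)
Your upper bound is the same as the paper's, and your reduction of the lower bound to the non-vanishing of the cokernel of the map $(\wS_{-(a+1)}V)([d+a]) \to (\wS_{-a}V)([d+a])$ is sound; your case $V_d \neq (JV)_d$ is also correct. But the ``delicate case'' $V_d=(JV)_d$ is a genuine gap, and neither of your proposed escapes works. The finite projective dimension claim is false: a nonzero finitely generated $\FI$-module of finite degree never has finite projective dimension, even over a field --- for instance, for $V'$ equal to $R$ concentrated in degree $0$, the Koszul complex gives $H^{\FI}_a(V')([a]) \cong \det([a]) \neq 0$ for every $a$ --- and indeed the theorem you are proving is precisely a quantitative form of this persistence of higher $\Tor$'s; moreover the theorem is stated over an arbitrary commutative ring, so importing a noetherian hypothesis would weaken it. The second route is not a proof: restricted to $[d+a]$, the complex has terms built from the $R$-modules $V([d-j])$ with differentials induced by whatever the structure maps $V([d-1])\to V([d])$, etc., happen to be; it is the (truncated) chain complex of a simplex only in very special situations, so ``nontrivial top homology of the truncation'' cannot be asserted in general --- and that top cokernel is exactly the quantity you still need to show is nonzero.

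What you are missing is the paper's counting argument, which needs no induction, no short exact sequence $0\to W\to V\to V'\to 0$, and no case split. Since $(\wS_{-(a-1)}V)([p+a])=0$ for $p=\deg(V)$, Theorem \ref{main result} identifies $H^{\FI}_a(V)([p+a])$ with the cokernel of an $R$-linear map $V([p-1])^{\oplus\binom{p+a}{a+1}} \to V([p])^{\oplus\binom{p+a}{a}}$, and one shows that \emph{no} surjection between these two modules can exist once $a$ is large, irrespective of what the map is. Over a field this is a dimension count: $\binom{p+a}{a+1}=\binom{p+a}{p-1}$ is a polynomial in $a$ of degree at most $p-1$, while $\binom{p+a}{a}=\binom{p+a}{p}$ has degree $p$. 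For a general commutative ring, localize at a maximal ideal $\mathfrak{q}$ with $V([p])_{\mathfrak{q}}\neq 0$ and apply Nakayama's lemma (here finite generation of $V$ guarantees $V([p])$ is a finitely generated $R$-module) to reduce to the residue field. Substituting this argument for your delicate case closes the proof, and in fact it subsumes your first case as well.
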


After recalling the construction of the complex $\wS_{-*}V$ in the next section, we prove the key result we need in Section \ref{key fact}. The proof of Theorem \ref{main result} will be given in Section \ref{the proofs}, and the proof of Theorem \ref{reg of torsion module} will be given in Section \ref{an application}.

\begin{remark}
Theorem \ref{main result} was first stated without proof in \cite[Remark 2.21]{CEFN}; after our present paper was written, we learnt that a different proof was given by Church and Ellenberg in \cite[Proposition 4.9]{CE}. Theorem \ref{reg of torsion module} was first conjectured in \cite[Corollary 5.22]{LR}.
\end{remark}

\section{Reminder on Koszul complex of an FI-module} \label{reminder}

The complex $\wS_{-*}V$ defined in \cite[(11)]{CEFN} is a Koszul complex \cite[\S12.4]{KS}. To state its definition, we need a few notations.

For any finite set $I$, let $RI$ be the free $R$-module with basis $I$, and $\det(I)$ the free $R$-module $\bigwedge^{|I|} RI$ of rank one. In particular, if $I=\emptyset$, then $\det(I)=R$. If $I=\{i_1,\ldots,i_a\}$, then $i_1\wedge \cdots \wedge i_a$ is a basis for $\det(I)$.

Let $V$ be an $\FI$-module and $X$ a finite set. Suppose $T\subset X$ and $i\in X\setminus T$. For any $v\in V(T)$, we shall write $i(v)$ for the element $\iota(v)\in V(T\cup\{i\})$ where $\iota$ is the inclusion map from $T$ to $T\cup \{i\}$. For any $a\in \N$, let
\begin{equation} \label{terms of Koszul complex}
(\wS_{-a}V)(X) := \bigoplus_{\substack{I\subset X \\ |I|=a}} V(X\setminus I)\otimes_R \det(I).
\end{equation}
The differential $d:(\wS_{-a}V)(X) \to (\wS_{-a+1}V)(X)$ is defined on each direct summand by the formula
\begin{equation*}
d(v\otimes i_1\wedge \cdots \wedge i_a) := \sum_{p=1}^a (-1)^p i_p(v) \otimes i_1\wedge\cdots \widehat{i_p} \cdots\wedge i_a,
\end{equation*}
where $v\in V(X\setminus I)$, $I=\{i_1,\ldots, i_a\}$, and $\widehat{i_p}$ means that $i_p$ is omitted in the wedge product. If $X'$ is a finite set and $\alpha: X\to X'$ is an injective map, then $\alpha: (\wS_{-a}V)(X) \to (\wS_{-a}V)(X')$ is defined on each direct summand by the formula
\begin{equation*}
\alpha(v \otimes i_1\wedge \cdots \wedge i_a) := \alpha(v) \otimes \alpha(i_1)\wedge \cdots \wedge\alpha(i_a),
\end{equation*}
where $v\in V(X\setminus I)$, $\alpha(v)\in V(X'\setminus \alpha(I))$, and $I=\{i_1,\ldots, i_a\}$. This defines, for each $a\in \N$, an $\FI$-module $\wS_{-a}V$. Thus, we obtain a complex $\wS_{-*}V$ of $\FI$-modules:
\begin{equation} \label{complex}
 \cdots \longrightarrow \wS_{-2}V \longrightarrow \wS_{-1}V \longrightarrow \wS_{0}V \longrightarrow 0.
 \end{equation}
For each $a\in  \N$, the homology $H_a(\wS_{-*}V)$ of the complex (\ref{complex}) is an $\FI$-module.

Observe that for each $n\in \N$, the image of $d: (\wS_{-1} V)([n]) \to (\wS_0 V)([n])$ is the $R$-submodule of $V([n])$ spanned by all elements of the form $\alpha(v)$ for some injective map $\alpha: [n-1] \to [n]$ and $v\in V([n-1])$, so
\begin{equation} \label{0-th homology of Koszul complex}
H_0(\wS_{-*} V) \cong V/JV.
\end{equation}

\begin{remark}
In \cite{CEFN}, the complex $\wS_{-*}V$ is constructed via another complex $B_*V$ defined in \cite[(10)]{CEFN}. We have given a direct construction of $\wS_{-*}V$.
\end{remark}

\section{A key fact} \label{key fact}

Recall that for each $d\in \N$, the $\FI$-module $M(d)$ is defined by $M(d)(X) := R \Hom_{\FI}([d],X)$ for each finite set $X$.

\begin{lemma} \label{degree 0}
Let $d\in \N$. If $n\neq d$, then $H_0( (\wS_{-*} M(d))([n]) )=0$.
\end{lemma}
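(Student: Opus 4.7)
The plan is to reduce the statement to an elementary combinatorial observation by invoking the identification $H_0(\wS_{-*}V) \cong V/JV$ established in (\ref{0-th homology of Koszul complex}). Since evaluation at $[n]$ is an exact functor on $\FI$-modules, it commutes with taking homology, so
\begin{equation*}
H_0((\wS_{-*}M(d))([n])) \cong M(d)([n])/(JM(d))([n]),
\end{equation*}
and the lemma reduces to showing that this quotient vanishes whenever $n\neq d$.

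If $n<d$, there are no injections from $[d]$ into $[n]$, so $M(d)([n])=0$ and there is nothing to prove. If $n>d$, I would verify directly that every basis element $\gamma\in \Hom_{\FI}([d],[n])$ of $M(d)([n])$ lies in the image of the differential $d:(\wS_{-1}M(d))([n])\to (\wS_{0}M(d))([n])$. According to the description given just before (\ref{0-th homology of Koszul complex}), this image is the $R$-span of all elements $\alpha(w)$ with $\alpha:[n-1]\to[n]$ an injection and $w\in M(d)([n-1])$; taking $w=\beta$ for some $\beta\in \Hom_{\FI}([d],[n-1])$, one sees that the image is spanned by the compositions $\alpha\circ\beta :[d]\to[n]$, that is, by those injections $[d]\to[n]$ whose image is a proper subset of $[n]$.

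Since $d<n$, any injection $\gamma:[d]\to[n]$ automatically misses some element $j\in[n]$. Choosing a bijection $\phi:[n-1]\xrightarrow{\sim} [n]\setminus\{j\}$ and regarding it as an injection $[n-1]\to[n]$, and setting $\beta:=\phi^{-1}\circ\gamma$, we get $\gamma = \phi\circ\beta$, exhibiting $\gamma$ as an element in the image. I do not foresee a real obstacle here: once (\ref{0-th homology of Koszul complex}) is invoked, the content is simply the combinatorial remark that every non-surjective injection factors through a proper subset.
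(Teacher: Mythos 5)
Your proof is correct and rests on the same combinatorial core as the paper's own argument: for $n<d$ the degree-zero term $R\Hom_{\FI}([d],[n])$ is already zero, and for $n>d$ every injection $[d]\to[n]$ misses a point and hence factors through an $(n-1)$-element subset, so it lies in the image of the differential from $(\wS_{-1}M(d))([n])$. Routing this through the identification $H_0(\wS_{-*}V)\cong V/JV$ is just a repackaging of the paper's direct check that the cokernel of the last differential vanishes, so no gap and no essentially new route.
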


\begin{proof}
The complex  $(\wS_{-*} M(d))([n])$ is:
\begin{equation*}
 \cdots \longrightarrow \bigoplus_{\substack{T\subset [n] \\ |T|=n-1}} R\Hom_{\FI}([d],T) \otimes_R \det( [n]\setminus T ) \longrightarrow R\Hom_{\FI}([d], [n]) \longrightarrow 0.
\end{equation*}
If $n<d$, then $\Hom_{\FI}(d, [n])=\emptyset$. If $n>d$, the image of any injective map from $[d]$ to $[n]$ lies in some subset $T\subset [n]$ with $|T|=n-1$. The lemma follows.
\end{proof}

The following result plays a crucial role in this paper.

\begin{proposition} \label{exactness}
Let $d\in \N$. If $a \geqslant 1$, then $H_a( \wS_{-*} M(d) ) = 0$.
\end{proposition}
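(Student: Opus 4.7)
The plan is, for each finite set $X$, to decompose the complex $(\wS_{-*} M(d))(X)$ as a direct sum of subcomplexes indexed by the injections $\varphi : [d] \hookrightarrow X$, and then to exhibit an explicit null-homotopy on each such subcomplex.

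Unpacking the definition of $\wS_{-a} M(d)$ gives
\[
(\wS_{-a} M(d))(X) \;=\; \bigoplus_{\substack{I \subset X \\ |I| = a}} R\Hom_{\FI}([d],\, X \setminus I) \otimes_R \det(I),
\]
whose natural $R$-basis consists of pairs $(\varphi, I)$ with $\varphi : [d] \hookrightarrow X$ having image disjoint from $I$. Re-indexing by $\varphi$ first and $I \subset X \setminus \varphi([d])$ second, and inspecting the formula for $d$, one sees that the differential preserves each $\varphi$-component (the injection $\varphi$ itself is unchanged, only the subset $I$ shrinks). This yields a splitting of complexes
\[
(\wS_{-*} M(d))(X) \;\cong\; \bigoplus_{\varphi : [d] \hookrightarrow X} C_{*}\bigl( X \setminus \varphi([d]) \bigr),
\]
where, for a finite set $Y$, $C_{-a}(Y) := \bigoplus_{I \subset Y,\, |I| = a} \det(I)$, equipped with the differential $i_1 \wedge \cdots \wedge i_a \mapsto \sum_{p=1}^{a} (-1)^p\, i_1 \wedge \cdots \widehat{i_p} \cdots \wedge i_a$.

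Next, I would show that $C_{*}(Y)$ is acyclic whenever $Y \neq \emptyset$. Identifying $C_{*}(Y)$ with $\bigwedge^{\bullet} RY$ in the obvious way, the displayed differential is (up to a global sign) the Koszul contraction $\iota_\lambda$ by the linear form $\lambda : RY \to R$ sending every element of $Y$ to $1$. Fix any $y_0 \in Y$; then the degree-shift operator $h(\omega) := y_0 \wedge \omega$ is a null-homotopy, since the antiderivation identity $\iota_\lambda(y_0 \wedge \omega) = \lambda(y_0)\omega - y_0 \wedge \iota_\lambda(\omega) = \omega - y_0 \wedge \iota_\lambda \omega$ gives $\iota_\lambda h + h \iota_\lambda = \mathrm{id}$, and hence $dh + hd = -\mathrm{id}$ on $C_{*}(Y)$.

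Combining these observations, each summand $C_{*}(X \setminus \varphi([d]))$ is either acyclic (when $|X| > d$, so $X \setminus \varphi([d])$ is nonempty) or equal to $C_{*}(\emptyset) = R$ concentrated in degree $0$ (when $|X| = d$ and $\varphi$ is a bijection); when $|X| < d$ there are no injections $\varphi$ at all. In every case $H_a$ vanishes for $a \geqslant 1$, which yields the proposition. The main point to check carefully is that the direct-sum decomposition is genuinely compatible with the differential; the acyclicity of each individual summand then reduces to the standard fact that the Koszul complex of a nonzero linear form on a finite free module is exact.
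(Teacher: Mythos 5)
Your proof is correct, and it takes a genuinely different route from the paper. You split $(\wS_{-*}M(d))(X)$ as a direct sum of subcomplexes indexed by injections $\varphi\colon [d]\hookrightarrow X$ — which is legitimate, since the structure maps of $M(d)$ send the basis injection $\varphi$ to itself viewed with enlarged codomain, so the differential only shrinks $I$ and never changes $\varphi$ — and then you contract each summand $C_*(X\setminus\varphi([d]))$, identified with $\bigwedge^{\bullet} R(X\setminus\varphi([d]))$ with differential $-\iota_\lambda$ for the augmentation form $\lambda$, by the explicit homotopy $\omega\mapsto y_0\wedge\omega$; the sign bookkeeping ($\iota_\lambda h+h\iota_\lambda=\mathrm{id}$, hence $dh+hd=-\mathrm{id}$) is right, and the case analysis on $|X|$ versus $d$ covers everything. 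The paper instead argues by induction on $n=|X|$: it isolates the subcomplex $C_{-*}$ of summands with $n\notin I$, identifies it and the quotient with (shifted) complexes evaluated at $[n-1]$, and runs the long exact sequence, using Lemma \ref{degree 0} for the degree-zero input and a direct check in the edge case $d=n-1$. Your argument is non-inductive and in fact stronger: the summand-by-summand contraction shows the complex is exact in all degrees when $|X|\neq d$, so it recovers Lemma \ref{degree 0} as a byproduct and exhibits $\wS_{-*}M(d)$ explicitly as a resolution of $H_0$; the one point you rightly flag and verify — compatibility of the $\varphi$-decomposition with the differential — is the only place the special form of $M(d)$ enters. (As with the paper's $C_{-*}$, your decomposition is not $S_n$-equivariant, but that is irrelevant since only vanishing of homology as $R$-modules is needed.)
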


\begin{proof}
We have to show that for each $n\in \N$, the homology $H_a((\wS_{-*} M(d))([n]))$ is 0 for all $a\geqslant 1$. We do this by induction on $n$. If $a>n$, then for any $\FI$-module $V$ one has $(\wS_{-a}V)([n])=0$. Therefore, if $a>n$, then $(\wS_{-a} M(d))([n])$ is 0 for all $d\in \N$. Thus,  the $n=0$ case is trivial. Now suppose $n>0$, and $H_a((\wS_{-*} M(d))([n-1]))=0$ for all $a\geqslant 1$ and for all $d\in \N$.

For each $a\in \N$, let $C_{-a}$ be the $R$-submodule of $(\wS_{-a} M(d))([n])$ spanned by the elements $\alpha \otimes i_1 \wedge \cdots \wedge i_a$ where $\alpha$ is an injective map from $[d]$ to $[n]\setminus \{i_1,\ldots, i_a\}$ and $n \notin \{i_1,\ldots, i_a\}$; thus, $C_{-a}$ is spanned by the direct summands of (\ref{terms of Koszul complex}) for $V=M(d)$ and $X=[n]$ with $n\notin I$. (Note that the definition of $C_{-a}$ depends on $d$ and $n$ but we suppress them from our notation. Moreover, $C_{-a}$ is not an $S_n$-submodule of $(\wS_{-a} M(d))([n])$.) It is clear that $C_{-*}$ is a subcomplex of $(\wS_{-*} M(d))([n])$.

For each basis element $\alpha  \otimes i_1 \wedge \cdots \wedge i_a  \in C_{-a}$, we have either $n \notin \alpha([d])$, or $n=\alpha(r)$ for exactly one $r\in [d]$; if $n\notin \alpha([d])$, then we can consider $\alpha$ as an injective map from $[d]$ to $[n-1]\setminus\{i_1,\ldots, i_a\}$ and so $\alpha  \otimes i_1 \wedge \cdots \wedge i_a$ can be considered as an element of $(\wS_{-a} M(d))([n-1])$, whereas if $r\in [d]$ and $n=\alpha(r)$, then $\alpha\vert_{[d]\setminus \{r\}}$ can be considered as an injective map from $[d-1]$ to $[n-1]\setminus \{i_1,\ldots, i_a\}$ (by fixing an identification of $[d]\setminus \{r\}$ with $[d-1]$) and so $\alpha\vert_{[d]\setminus \{r\}} \otimes i_1 \wedge \cdots \wedge i_a$ can be considered as an element of $(\wS_{-a} M(d-1))([n-1])$. Hence, we can identify the complex $C_{-*}$ with:

\begin{equation*}
(\wS_{-*} M(d))([n-1]) \bigoplus \left( \bigoplus_{r=1}^d (\wS_{-*} M(d-1))([n-1]) \right).
\end{equation*}

The quotient complex of $(\wS_{-*} M(d))([n])$ by $C_{-*}$ can be identified with $\Sigma (\wS_{-*} M(d))([n-1])$, where $\Sigma$ is the shift functor on complexes so that $(\Sigma K)_{-a} = K_{-a+1}$ for any complex $K$; specifically, each basis element $\alpha\otimes i_1\wedge \cdots \wedge i_{a-1}$ of $(\wS_{-a+1} M(d))([n-1])$ (where $\alpha$ is an injective map from $[d]$ to $[n-1]\setminus \{i_1,\ldots, i_{a-1}\}$) can be identified with the element $\alpha\otimes i_1\wedge \cdots \wedge i_{a-1} \wedge n \in (\wS_{-a} M(d))([n])$.

Therefore, we have a short exact sequence of complexes:
\begin{equation*}
0 \longrightarrow C_{-*} \longrightarrow (\wS_{-*} M(d))([n]) \longrightarrow \Sigma (\wS_{-*} M(d))([n-1]) \longrightarrow 0.
\end{equation*}

This gives a long exact sequence in homology:
\begin{multline*}
\cdots \longrightarrow H_a(C_{-*}) \longrightarrow H_a( (\wS_{-*} M(d))([n]) ) \longrightarrow H_{a-1} ((\wS_{-*} M(d))([n-1])) \\
\longrightarrow H_{a-1} (C_{-*}) \longrightarrow \cdots .
\end{multline*}
By the induction hypothesis, if $a\geqslant 2$, then we have $H_a(C_{-*})=0$ and $H_{a-1}((\wS_{-*} M(d))([n-1]))=0$, so $H_a( (\wS_{-*} M(d))([n]) ) =0$. By the induction hypothesis, we also have $H_1(C_{-*})=0$. Moreover, by Lemma \ref{degree 0}, we have $H_0 ((\wS_{-*} M(d))([n-1]))=0$ if $d\neq n-1$. Hence we have $H_1( (\wS_{-*} M(d))([n]) ) =0$ if $d\neq n-1$. If $d=n-1$, the complex $(\wS_{-*} M(d))([n]) )$ is:
\begin{equation*}
0 \longrightarrow \bigoplus_{\substack{T\subset [n] \\ |T|=n-1}} R\Hom_{\FI}([n-1],T) \otimes_R \det( [n]\setminus T ) \longrightarrow R\Hom_{\FI}([n-1], [n]) \longrightarrow 0.
\end{equation*}
Since the above sequence is exact, we have $H_1( (\wS_{-*} M(n-1))([n]) ) =0$.
\end{proof}

\section{Proof of Theorem \ref{main result}} \label{the proofs}

We will now prove Theorem \ref{main result}.

\begin{proof}[Proof of Theorem \ref{main result}]
For any $\FI$-module $V$ and $a\in \N$, we shall write
\begin{equation*}
\widetilde{H}_a(V) := H_a(\wS_{-*}V);
\end{equation*}
our aim is to prove that $H^{\FI}_a(V) \cong \widetilde{H}_a(V)$.

Any short exact sequence $0 \to V' \to V \to V'' \to 0$ of $\FI$-modules gives a short exact sequence
$0 \to \wS_{-*} V'\to  \wS_{-*} V \to  \wS_{-*} V'' \to 0$ of complexes. It follows that the system $\widetilde{H} = (\widetilde{H}_*)$ is a homological $\delta$-functor. For any $\FI$-module $V$, there is a surjective homomorphism $\bigoplus_{i\in I} M(d_i) \to V$ for some $d_i\in \N$. Hence, by Proposition \ref{exactness}, the functor $\widetilde{H}_a$ is coeffaceable for each $a\geqslant 1$. By \cite[Proposition 2.2.1]{Grothendieck}, it follows that $\widetilde{H}$ is a universal $\delta$-functor. Since, by (\ref{H0}) and (\ref{0-th homology of Koszul complex}), we have
\begin{equation*}
\widetilde{H}_0 (V) \cong \frac{V}{JV} \cong H^{\FI}_0(V),
\end{equation*}
it follows that $\widetilde{H}_a (V) \cong H^{\FI}_a(V)$ for all $a$.
\end{proof}

The following corollary was first proved in \cite[Proposition 2.25]{CEFN}; let us show that it is an immediate consequence of Theorem \ref{main result} and \cite[Theorem A]{CEFN}.

\begin{corollary} \label{vanishing}
Suppose $R$ is noetherian and $V$ is a finitely generated $\FI$-module. For each $a\in \N$, one has $\deg( H_a(\wS_{-*}V) ) < \infty$.
\end{corollary}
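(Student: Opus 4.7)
The plan is to deduce the corollary from Theorem \ref{main result} by computing $\FI$-homology via a finitely generated free resolution, whose existence is guaranteed by the noetherianity result \cite[Theorem A]{CEFN}. By Theorem \ref{main result} it suffices to show $\deg(H^{\FI}_a(V)) < \infty$ for each $a$.

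Using \cite[Theorem A]{CEFN}, I would build a projective resolution $P_\bullet \to V$ in which each term $P_a = \bigoplus_{i=1}^{k_a} M(d_{a,i})$ is a \emph{finite} direct sum of free $\FI$-modules of the form $M(d)$. This is done inductively: $V$ itself, being finitely generated, admits a surjection from some finitely generated free $\FI$-module; by noetherianity, the kernel of this surjection is again finitely generated, and one iterates.

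The key observation is that $M(d)/JM(d)$ is concentrated in degree $d$. Indeed, when $n > d$ every injection $[d] \to [n]$ factors as an injection $[d] \to [n-1]$ followed by an injection $[n-1] \to [n]$, so $M(d)([n])$ lies entirely in $(JM(d))([n])$, while $M(d)([n]) = 0$ for $n < d$. Hence $\deg(P_a/JP_a) \leqslant \max_i d_{a,i} < \infty$. Since
\begin{equation*}
H^{\FI}_a(V) \cong \Tor^A_a(A/J, V) \cong H_a(P_\bullet/JP_\bullet)
\end{equation*}
is a subquotient of $P_a/JP_a$, its degree is also finite. There is no real obstacle here; the only point that deserves care is ensuring that noetherianity is used in the form needed to bound the number of summands at each homological step, which is exactly what \cite[Theorem A]{CEFN} delivers.
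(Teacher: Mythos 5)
Your proposal is correct and takes essentially the same route as the paper: both reduce via Theorem \ref{main result} to $H^{\FI}_a(V)$, use \cite[Theorem A]{CEFN} to produce a finitely generated projective resolution $P_\bullet \to V$, and conclude by bounding $\deg(P_a/JP_a)$. The only cosmetic difference is that you take the $P_a$ to be finite sums of the free modules $M(d)$ and observe that $M(d)/JM(d)$ is concentrated in degree $d$, whereas the paper notes more generally that any finitely generated $\FI$-module annihilated by $J$ has finite degree.
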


\begin{proof}[Proof of Corollary \ref{vanishing}]
By \cite[Theorem A]{CEFN}, there is a projective resolution $\cdots \to P_2 \to P_1 \to P_0$ of $V$, where each $P_a$ is a finitely generated projective $\FI$-module. Applying the functor (\ref{H0}) to the complex $P_*$, we obtain a complex $P_*/JP_*$ where each $P_a/JP_a$ is a finitely generated $\FI$-module. But for any finitely generated $\FI$-module $W$ with $JW=0$, one has $W([n]) = 0$ for all $n$ sufficiently large; indeed, if $W$ is generated by a finite set of elements $w_i \in W([n_i])$ for $i=1,\ldots, m$, then $W([n])=0$ for $n>\max\{n_1,\ldots, n_m\}$ since $\alpha(w_i)=0$ for every $\alpha\in \Hom_{\FI}([n_i], [n])$ with $n>n_i$. Therefore, we have $\deg(P_a/JP_a) < \infty$. By Theorem \ref{main result}, we have $H_a (\wS_{-*}V) \cong H^{\FI}_a(V) \cong H_a ( P_*/JP_* )$. Hence, $\deg(H_a(\wS_{-*}V)) < \infty$.
\end{proof}

\section{An application} \label{an application}

The goal of this section is to prove Theorem \ref{reg of torsion module}.

\begin{proof}[Proof of Theorem \ref{reg of torsion module}]
The $V=0$ case is trivial, so assume that $V\neq 0$. Set $p=\deg(V)$.

Let $a\in \N$. If $n>p+a$, then $V([n-a])=0$, so $(\wS_{-a}V)([n]) = 0$, and hence by Theorem \ref{main result} one has $H^{\FI}_a(V)([n])=0$. We claim that $H^{\FI}_a(V)([p+a]) \neq 0$ for all $a$ sufficiently large; this would imply the Theorem.

One has:
\begin{align*}
(\wS_{-(a+1)} V)([p+a]) &\cong V([p-1])^{\oplus {p+a \choose a+1}},\\
(\wS_{-a} V)([p+a]) &\cong V([p])^{\oplus {p+a \choose a}},\\
(\wS_{-(a-1)} V)([p+a]) &\cong 0,
\end{align*}
where $V([p-1])$ is 0 if $p=0$. By Theorem \ref{main result}, to prove our claim, it suffices to show the following statement: \emph{for all $a$ sufficiently large, there is no surjective $R$-linear map \[ V([p-1])^{\oplus {p+a \choose a+1}} \longrightarrow V([p])^{\oplus {p+a \choose a}}.\]}

If $R$ is a field, the above statement is clear for $\dim\left( V([p-1])^{\oplus {p+a \choose a+1}} \right)$ is a polynomial in $a$ of degree at most $p-1$, while $\dim \left( V([p])^{\oplus {p+a \choose a}} \right) $ is a polynomial in $a$ of degree $p$. Suppose now that $R$ is any commutative ring and set $M=V([p])$; we can reduce the statement to the case where $R$ is a field using the following observations:
\begin{itemize}
\item Since $M$ is a nonzero $R$-module, there exists a maximal ideal $\mathfrak{q}$ of $R$ such that the localization $M_{\mathfrak{q}}$ is nonzero; thus we reduce to the case where $R$ is a ring.

\item If $R$ is a local ring with unique maximal ideal $\mathfrak{q}$, then since $M$ is a nonzero finitely generated $R$-module, it follows by Nakayama's lemma that $M/\mathfrak{q}M$ is nonzero; thus we reduce to the case where $R$ is a field.
\end{itemize}

This concludes the proof of the theorem.
\end{proof}

We believe that the conclusion of this theorem holds for torsion modules which might not be finitely generated. However, the technique used here (comparing dimensions) does not work in this general case. It would be interesting to find a proof valid for all torsion modules.

\end{document}